\newtheorem{theorem}{Theorem}[section]
\newtheorem{proposition}[theorem]{Proposition}
\newtheorem{lemma}[theorem]{Lemma}
\theoremstyle{definition}
\newtheorem{definition}[theorem]{Definition}
\newtheorem{example}[theorem]{Example}
\theoremstyle{remark}
\newtheorem{remark}[theorem]{Remark}
\theoremstyle{plain}
\newcommand{\thistheoremname}{}
\newtheorem{genericthm}[theorem]{\thistheoremname}
\newtheorem*{genericthm*}{\thistheoremname}
\newenvironment{namedthm*}[1]
  {\renewcommand{\thistheoremname}{#1}%
   \begin{genericthm*}}
  {\end{genericthm*}}
\newcommand{\bC}{\mathbb{C}}
\newcommand{\bR}{\mathbb{R}}
\newcommand{\bZ}{\mathbb{Z}}
\newcommand\bm{\mathbf{m}}
\newcommand\bn{\mathbf{n}}
\newcommand\bzero{\mathbf0}
\newcommand{\on}{\operatorname}
\newcommand{\Fuk}{\on{Fuk}}
\newcommand{\comp}{C^2}
\newcommand{\Symp}{\mathsf{Symp}}
\renewcommand{\comp}{\text{comp}}
\newcommand{\seam}{\text{seam}}
\newcommand{\incom}{\text{in}}
\newcommand{\inte}{{\on{int}}}
\renewcommand{\root}{{\on{root}}}
\newcommand{\FM}{{\on{FM}}}
\newcommand{\GJ}{{\on{GJ}}}
\newcommand{\mr}{\mathring}
\newcommand{\Ass}{{\on{Ass}}}
\newcommand{\Top}{\textsf{Top}}
\renewcommand{\comp}{{\on{comp}}}
\newcommand{\mk}{{\on{mark}}}
\newcommand{\tree}{{\on{tree}}}
\newcommand\qu{/\kern-.7ex/} 
\newcommand\lqu{\backslash \kern-.7ex \backslash}
\newcommand{\ol}{\overline}
\newcommand{\sr}{\stackrel}
\newcommand{\wt}{\widetilde}
\newcounter{qcounter}
\newcommand\quotient[2]{
        \mathchoice
            {
                \text{\raise1ex\hbox{$#1$}\Big/\lower1ex\hbox{$#2$}}%
            }
            {
                #1\,/\,#2
            }
            {
                #1\,/\,#2
            }
            {
                #1\,/\,#2
            }
    }
\newcommand\quoti[2]{
                \text{\raise1ex\hbox{$#1$}/\lower1ex\hbox{$\scriptstyle#2$}}
  }
\newcommand\quot[2]{
                \text{\raise1ex\hbox{$#1\!\!$}/\lower1ex\hbox{$\!\scriptstyle#2$}}
  }
\newcommand\quo[2]{
                \text{\raise.8ex\hbox{$\scriptstyle#1\!$}/\lower.8ex\hbox{$\!\scriptstyle#2$}}
  }
\newcommand\qq[2]{
                \text{\raise.8ex\hbox{$#1\!$}/\lower.8ex\hbox{$#2$}}
}
\begin{document}

\title{A simplicial version of the 2-dimensional Fulton--MacPherson operad}
\author{Nathaniel Bottman}

\maketitle

\begin{abstract}
We define an operad in \textsf{Top}, called $\FM_2^W$.
The spaces in $\FM_2^W$ come with CW decompositions, such that the operad compositions are cellular.
In fact, each space in $\FM_2^W$ is the realization of a simplicial set.
We expect, but do not prove here, that $\FM_2^W$ is isomorphic to the 2-dimensional Fulton--MacPherson operad $\FM_2$.
Our construction is connected to the author's work on the symplectic $(A_\infty,2)$-category, and suggests a strategy toward equipping the symplectic cochain complex with the structure of a homotopy Batalin--Vilkoviskiy algebra.
\end{abstract}

\section{Introduction}
\label{s:intro}

In 1994, Getzler--Jones \cite{getzler_jones} introduced the Fulton--MacPherson operad
\begin{align}
\FM_2 = \bigl(\FM_2(k)\bigr)_{k \geq 1},
\end{align}
where $\FM_2(k)$ is the compactification \`a la Fulton--MacPherson \cite{fm} of the configuration space of $k$ distinct labeled points in $\bR^2$, modulo translations and dilations.
Getzler and Jones proposed in the same paper a collection of cellular decompositions of the spaces in $\FM_2$, such that these decompositions are compatible with the operad maps $\circ_i\colon \FM_2(k) \times \FM_2(\ell) \to \FM_2(k+\ell-1)$.
These decompositions formed the basis for a significant amount of work related to the Deligne conjecture, including a proof in \cite{getzler_jones} of that conjecture.

Unfortunately, Tamarkin found an error in Getzler--Jones' decomposition.
In particular, in the 9-dimensional space $\FM_2(6)$, there are two disjoint open 6-cells $C_1, C_2$ with the property that $\ol{C_1} \cap C_2$ is nonempty, as described in \cite[\S1.2.2]{voronov}.
In a recent preprint \cite{salvatore_main}, Salvatore used meromorphic differentials to construct cellular decompositions of the spaces in $\FM$.
His approach is completely different from Getzler--Jones'.

In this paper, we construct an operad of CW complexes, which we conjecture to be isomorphic in $\Top$ to $\FM_2$.
Under this expected isomorphism, our decompositions are refinements of Getzler--Jones' attempted decompositions.
The context for the current paper is the author's program (as developed in \cite{b:2-associahedra,b:realization,b:sing,b:thesis,bc,bo,bw:compactness}) to construct $\Symp$, the symplectic $(A_\infty,2)$-category.
Specifically, the author plans to use the decompositions of $\FM$ that we construct here to understand the axioms for identity 1-morphisms in an $(A_\infty,2)$-category.
In the context of $\Symp$, this suggests a strategy toward endowing symplectic cohomology with a chain-level homotopy Gerstenhaber (and eventually, homotopy BV) algebra structure that is finite in each arity, thus answering Conjecture 2.6.1 from \cite{abouzaid}.
We note that our approach is compatible with the operations in $\Symp$, unlike Salvatore's; in addition, we expect our approach to generalize to the Fulton--MacPherson operad of any dimension.

\subsection{Getzler--Jones' attempted decomposition}

Getzer--Jones' attempted decomposition is an adaptation to the case of $\FM_2$ of Fox--Neuwirth's decomposition \cite{fox_neuwirth} of the one-point compactification of the configuration space $(\bR^2)^k \setminus \Delta$ of $k$ points in $\bR^2$, where $\Delta$ is the fat diagonal.
A Fox--Neuwirth cell corresponds to a choice of which subsets of the points $p_1, \ldots, p_k$ should be vertically aligned, the left-to-right order in which these subsets of points should appear, and the top-to-bottom order in which each subset of the points should appear.
For instance, the following is a real-codimension-3 cell in $\bigl((\bR^2)^6\setminus\Delta\bigr)^*$:
\begin{figure}[H]
\centering
\def\svgwidth{0.175\columnwidth}
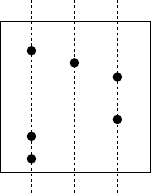
\caption{\label{fig:GJ}}
\end{figure}
\noindent
Getzler--Jones observed that the Fox--Neuwirth cells are invariant under translations and dilations, and moreover that one can define a similar type of cell for the boundary locus.
The elements in the boundary of $\FM_2(k)$ are trees of ``screens'', and these ``boundary cells'' are defined by partitioning and ordering the points on each of the screen in the same way as with Fox--Neuwirth cells.

\subsection{Tamarkin's counterexample}
\label{ss:tamarkin}

As described in \cite{voronov}, Tamarkin observed a way in which Getzler--Jones' supposed decomposition fails.
Consider $\FM_2(6)$, the open locus of which parametrizes configurations of six distinct points in $\bR^2$, up to translations and dilations.
Next, we consider the two 6-cells $C_1$ and $C_2$ pictured below.
(We omit the numberings.)
\begin{figure}[H]
\centering
\includegraphics[width=0.5\textwidth]{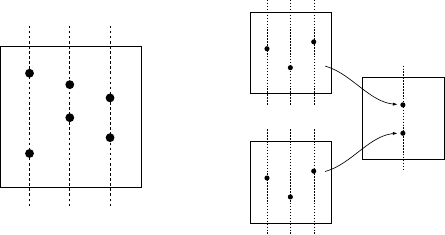}
\end{figure}
\noindent
The $j$-th bubble in $C_2$ (for $j=1,2$) carries a modulus $\lambda_j$ defined in the following way: by translating and dilating, we can move the left resp.\ right lines to $x=0$ and $x=1$; we then denote by $\lambda_j$ the position of the middle line.
The intersection $\ol{C_1} \cap C_2$ is the codimension-1 locus in $C_2$ in which $\lambda_1 = \lambda_2$.
What Getzler--Jones proposed is therefore not a cellular decomposition, because the intersection of the closures of two distinct $n$ cells should be contained in the $(n-1)$-skeleton.

In our construction, $C_1$, $C_2$, and $\ol{C_1} \cap C_2$ will each be a union of cells.

\subsection{An overview of our construction}
\label{ss:overview}

In this paper, we construct a collection of CW complexes $\FM_2^W(k)$ and maps
\begin{align}
\circ_i
\colon
\FM_2^W(k) \times \FM_2^W(\ell)
\to
\FM_2^W(k+\ell-1),
\qquad
1 \leq i \leq k.
\end{align}
Here is our main result:

\medskip

\noindent
{\bf Main theorem.}
{\it The spaces $\bigl(\FM_2^W(k)\bigr)_{k\geq1}$ together with the composition operations $\circ_i$ form a non-$\Sigma$ operad, and the composition maps
\begin{align}
\circ_i
\colon
\FM_2^W(k) \times \FM_2^W(\ell)
\to
\FM_2^W(k+\ell-1)
\end{align}
are cellular.}

\bigskip

We will now give a brief overview of the definition of $\FM_2^W(k)$.

\bigskip

\noindent
{\bf 1.}
First, we define a ``W-version'' $W_\bn^W$ of the 2-associahedra by completing the following analogy:
\begin{align}
K_r \quad:\quad W(\Ass) \quad::\quad W_\bn \quad:\quad W_\bn^W.
\end{align}
Here $K_r$ is the $(r-2)$-dimensional associahedron, and $W(\Ass)$ is the Boardman--Vogt W-construction applied to the associative operad, which is defined in terms of metric stable trees and yields an operad of CW complexes that is isomorphic to the associahedral operad $K$ in $\Top$.
$W_\bn$ is an $(|\bn|+r-3)$-dimensional 2-associahedron, and $W_\bn^W$ is a CW complex that we define in \S\ref{s:W_n^W} in terms of metric stable tree-pairs and which we expect to be homeomorphic to $W_\bn$.
We then refine the CW structure on $W_\bn^W$ to a simplicial decomposition.

\medskip

\noindent
{\bf 2.}
Toward our construction of $\FM_2^W(k)$, we decompose $\FM_2(k)$ into Getzler--Jones cells, then identify each open Getzler--Jones cell with a product of open 2-associahedra.
We then replace each such product by the corresponding product of interiors of the spaces $W_\bn^W$ described in the previous step.
This product comes with a decomposition into products of simplices, and we refine this to a simplicial structure.
Finally, we attach these decomposed Getzler--Jones cells together to produce $\FM_2^W(k)$.
This part of the construction appears in \S\ref{s:FM_2^W}.

\bigskip

The essential property of $\FM_2^W(k)$ that we must verify is that our CW decomposition is valid.
It is clear that our putative open cells disjointly decompose our space, and that they are homeomorphic to open balls.
The only nontrivial check we need to make is that the $n$-cells are attached to the $(n-1)$ skeleton.
This is where Getzler--Jones' attempted decomposition fails: the 6-cell $C_1$ that we described in \S\ref{ss:tamarkin} is not attached to the 5-skeleton.
Our decomposition satisfies this property by construction: we attach a given $n$-cell by taking a closed $n$-simplex, then attaching it to the existing skeleton via quotient maps from the boundary $(n-1)$-simplices to the $(n-1)$-skeleton.
In fact, the boundary of an $n$-cell is a union of cells of dimension at most $n-1$.

\subsection{The relationship between our construction and $\Symp$}
\label{ss:relationship}


The genesis of the construction of $\FM_2^W$ was a connection between the symplectic $(A_\infty,2)$-category $\Symp$ and $E_2$ suggested by Jacob Lurie in 2016.
(The construction of $\Symp$ is a long-term project of the author, building on work of Ma'u--Wehrheim--Woodward; see \cite{b:2-associahedra,b:realization,b:sing,b:thesis,bc,bw:compactness,mww}.)
We can express this connection concretely, via a collection of maps
\begin{align}
f_\sigma^W\colon W_\bn^W \to \FM_2^W(|\bn|),
\end{align}
where $\sigma$ is a \emph{2-permutation}, as defined in \S\ref{ss:FM_2^W_construction}.
The idea of this map is very simple.
The map $f_\sigma$ forgets the data of the lines, then labels the points according to the 2-permutation $\sigma$.
Then $f_\sigma$ extends continuously to the boundary of $W_\bn$; it is an embedding on the interior of its domain, but contracts some boundary cells.

\begin{example}
\label{ex:W_111}
In this figure, we depict $W_{111}$ and its image under an appropriate map $f_\sigma$.
More precisely, we depict their nets --- to ``assemble'' both CW complexes, one would cut them out, then glue together like-numbered edges.
As is evident, most of the 2-cells of $W_{111}$ are contracted by $f_\sigma$.
\begin{figure}[H]
\centering
\includegraphics[width=0.85\textwidth]{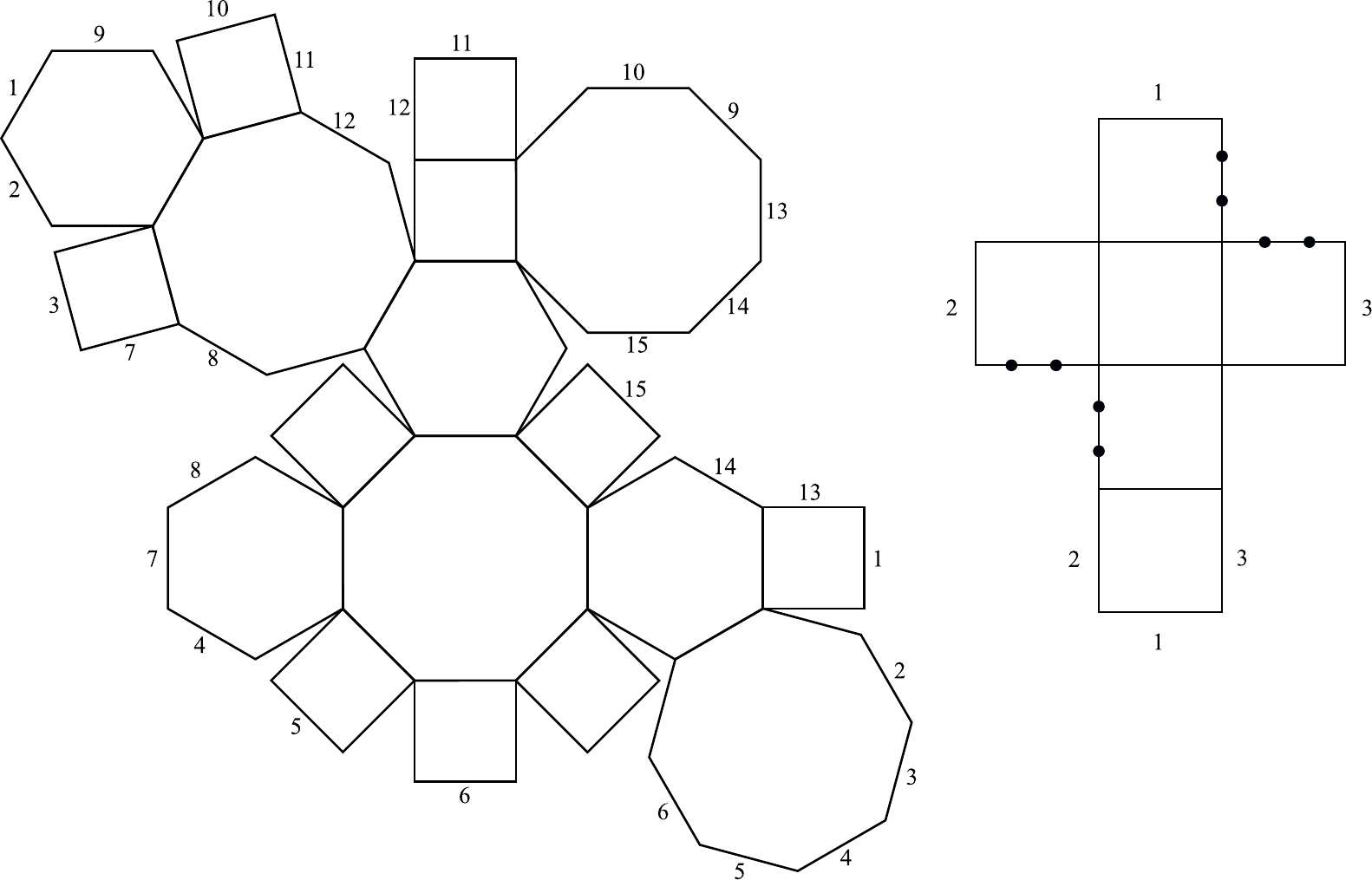}
\end{figure}
\end{example}

While it would take us too far afield to explain the relationship between $\FM_2$ and $\Symp$ (and their W-counterparts) in detail, let us indicate the basic idea.
$\Symp$, being an $(A_\infty,2)$-category, assigns to a chain in a 2-associahedron $W_\bn$ an operation on 2-morphisms.
(For instance: the objects of $\Symp$ are symplectic manifolds, and given two objects $M_0, M_1$, the 1-morphism category is $\Fuk(M_0^-\times M_1)$; 2-associahedra $W_n$, where $n$ is a single positive integer, act on this Fukaya category by the usual $A_\infty$-operations.)
The current definition of an $(A_\infty,2)$-category, appearing in \cite{bc}, does not equip identity 1-morphisms with all the possible structure.
Indeed, when defining operations on 2-morphisms in the situation where some of the 1-morphisms are identities, those 1-morphisms should be allowed to be ``moved past'' the other 1-morphisms.
To make this precise, one exactly needs to understand the maps $f_\sigma$, and to equip their targets with a CW structure so that $f_\sigma$ is cellular.
One way to proceed toward this goal is to first decompose $\FM_2^W$ so that $f_\sigma^W$ is cellular, and next construct coherent homeomorphisms $W_\bn \cong W_\bn^W$ and $\FM_2(k) \cong \FM_2^W(k)$.

The following result therefore shows the way toward a connection between the symplectic $(A_\infty,2)$-category and $\FM_2^W$.
It is an immediate consequence of our construction of $W_\bn^W$ and $\FM_2^W(k)$, and it forms the content of Remark~\ref{rem:FM_W} below.

\medskip

\noindent
{\bf Proposition.}
{\it Fix $r \geq 1$, $\bn \in \bZ_{\geq0}^r\setminus\{\bzero\}$, and a 2-permutation $\sigma$ of type $\bn$.
Then the associated map
\begin{align}
f_\sigma^W\colon W_\bn^W \to \FM_2^W(|\bn|)
\end{align}
is cellular.}

\subsection{Future directions}

The author plans to develop several aspects of the current paper.
In particular:

\begin{itemize}
%
%
\item
With several collaborators, the author plans to extend this work to produce cellular decompositions of $\FM_k^W$ for all $k \geq 1$, and to show that $\FM_k^W$ is isomorphic to $\FM_k$ in $\Top$.

\smallskip

\item
This paper can be construed as a way of incorporating identity 1-morphisms into the symplectic $(A_\infty,2)$-category.
The author plans to formalize this in future work on the algebra of $(A_\infty,2)$-categories.

\smallskip

\item
We plan to upgrade this work to give a cellular model for the framed analogue of the Fulton--MacPherson operad.
This suggests a way of endowing symplectic cohomology with a chain-level BV-algebra structure, which is the subject of Conjecture 2.6.1 from \cite{abouzaid}.
\end{itemize}

\subsection{Acknowledgments}
This paper is a solution to homework problem \#12 from Paul Seidel's course on Categorical Dynamics and Symplectic Topology at MIT in Spring 2013.
The author thanks Prof.\ Seidel for his patience.

Jacob Lurie drew an analogy that suggested to the author that there must be a link between $(A_\infty,2)$-categories and $E_2$-algebras.
Alexander Voronov explained to the author the colorful history surrounding this problem.
A conversation with Naruki Masuda, Hugh Thomas, and Bruno Vallette led the author to think about replacing $\FM_2$ with a ``W-construction version'' thereof.
The author thanks Dean Barber, Michael Batanin, Sheel Ganatra, Ezra Getzler, Mikhail Kapranov, Ben Knudsen, Paolo Salvatore, and Dev Sinha for their interest and encouragement.

The author was supported by an NSF Mathematical Sciences Postdoctoral Research Fellowship and by an NSF Standard Grant (DMS-1906220).
He thanks the Institute for Advanced Study, the Mathematical Sciences Research Institute, and the University of Southern California for providing excellent working conditions during the period when this work was carried out.

\section{A ``W-version'' of the 2-associahedra}
\label{s:W_n^W}

In this section, we construct a ``W-version'' of the 2-associahedra.
(The 2-associahedra were originally defined in \cite{b:2-associahedra}.)
This is an essential ingredient in our definition of $\FM_2^W(k)$, which will appear in \S\ref{s:FM_2^W}.

\subsection{A warm-up: $K^W$, i.e.\ $W(\Ass)$, i.e.\ a W-version of the associahedra}

In this subsection, we recall a certain operad, which we will denote by $K^W = \bigl(K_r^W\bigr)_{r \geq 1}$.
This is simply the Boardman--Vogt W-construction applied to the associative operad $\Ass$.
We construct only $K^W$ rather than recalling the general definition of the W-construction, because this one-off construction will be a useful warm-up to our construction of $W^W$ later in this section.
As noted in \cite{barber}, $K^W$ is isomorphic in $\Top$ to the associahedral operad $K$.

The following proposition summarizes what we will prove about $K^W$.

\begin{proposition}
\label{prop:K_r^W_operad}
The spaces $\bigl(K_r^W\bigr)_{r\geq 1}$ form a non-$\Sigma$ operad of CW complexes, and the composition maps
\begin{align}
\circ_i
\colon
K_r^W \times K_s^W \to K_{r+s-1}^W
\end{align}
defined in Def.~\ref{def:K_r_composition} are cellular.
\end{proposition}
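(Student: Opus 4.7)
My plan is to realize $K_r^W$ directly as the standard Boardman--Vogt W-construction, with cells indexed by planar stable rooted trees with $r$ leaves and internal edges carrying lengths in $[0,1]$. Given such a tree $T$, let $e(T)$ denote its set of internal edges; the corresponding closed cell is the cube $[0,1]^{e(T)}$, and its interior $(0,1)^{e(T)}$ is an open cell of $K_r^W$. The attaching map identifies a face where some coordinate equals $0$ with a cell of strictly lower dimension, by contracting the corresponding edge and remembering the remaining lengths. As a point-set, $K_r^W$ is the quotient of $\bigsqcup_T [0,1]^{e(T)}$ by these identifications.

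\textbf{Step 1 (CW structure).} I would first check that the quotient topology makes $K_r^W$ into a genuine CW complex. The main thing to verify is that the characteristic map $[0,1]^{e(T)} \to K_r^W$ is a homeomorphism on the interior and sends the boundary of the cube into the $(|e(T)|-1)$-skeleton. This follows from the observation that every boundary stratum of $[0,1]^{e(T)}$ is itself a cube $[0,1]^{e(T')}$ for the tree $T'$ obtained by contracting the zero-length edges, and the compatibility of these identifications is a straightforward bookkeeping exercise (collapsing edges in different orders yields the same final tree). Hausdorffness follows because the identifications respect a filtration by total edge length.

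\textbf{Step 2 (operad structure).} Define $\circ_i\colon K_r^W \times K_s^W \to K_{r+s-1}^W$ by grafting the root of $T_2$ to the $i$-th leaf of $T_1$ and assigning the newly created internal edge length $1$, while preserving all other lengths. The non-$\Sigma$ operad axioms (associativity, unit via the trivial one-leaf tree) reduce to the corresponding combinatorial identities on planar rooted trees; this is the standard verification for the W-construction and introduces no new subtleties.

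\textbf{Step 3 (cellularity).} To show $\circ_i$ is cellular, observe that a product of open cells $C_{T_1} \times C_{T_2}$ corresponding to trees with $k_1$ and $k_2$ internal edges is sent into the closed cube $[0,1]^{e(T_1 \circ_i T_2)} = [0,1]^{k_1+k_2+1}$, precisely onto the codimension-one face where the grafted edge has length $1$. On this face the map is the identity on the remaining $k_1+k_2$ coordinates, hence a cellular embedding of $C_{T_1} \times C_{T_2}$ into a closed cell of $K_{r+s-1}^W$.

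The main obstacle, and the one deserving the most care, is \textbf{Step 1}: ensuring that the edge-collapse identifications on the boundaries of the cubes are self-consistent, so that the characteristic maps genuinely have image in a lower skeleton and the resulting cell structure is honest (in particular, closure-finite and with the weak topology). Once this bookkeeping is cleanly set up --- ideally by indexing cells by isomorphism classes of planar stable rooted trees and recording the contraction poset --- the operad axioms and cellularity of $\circ_i$ follow formally from the structure of grafting.
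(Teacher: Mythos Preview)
Your approach matches the paper's: both argue that $\circ_i$ sends $C_T \times C_{T'}$ onto the face of $C_{T''}$ where the grafted edge has length~$1$, and that this is cellular. The paper's proof is essentially a one-paragraph version of your Step~3.

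One small gap in your Step~1: you claim that every boundary stratum of $[0,1]^{e(T)}$ is identified with a cube $[0,1]^{e(T')}$ for a contracted tree~$T'$, but this only accounts for faces where some coordinate equals~$0$. Faces where some coordinate equals~$1$ are not glued to any smaller $C_{T'}$, so they must persist as separate cells in the quotient. The honest cell set is therefore indexed by pairs $(T,S)$ with $S\subseteq e(T)$ the set of edges frozen at length~$1$, not just by trees~$T$ alone. This is easy to fix and does not affect your Step~3, since the image of $C_{T_1}\times C_{T_2}$ is exactly the closed cell corresponding to $(T'',\{\text{grafted edge}\})$.

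One further point of comparison: after the cubical structure, the paper introduces a simplicial refinement (triangulating each cube by the standard order-simplex decomposition), and its cellularity argument is phrased with respect to that finer structure --- hence the remark that ``the CW structure of this face of $C_{T''}$ is finer than that of $C_T\times C_{T'}$.'' Your argument handles the coarser cubical structure, where the face is literally the product cell; to extend it to the simplicial refinement one needs the additional observation that a product of order-simplices in $[0,1]^{k_1}\times[0,1]^{k_2}$ is a union of order-simplices in $[0,1]^{k_1+k_2}$.
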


\noindent
We will prove Prop.~\ref{prop:K_r^W_operad} at the end of the current subsection.

We begin with a definition of rooted ribbon trees.
Stable rooted ribbon trees with $r$ leaves index the strata of the associahedron $K_r$, and they will be an integral part of the definition of $K_r^W$.

\begin{definition}[Def.~2.2, \cite{b:2-associahedra}]
\label{def:Krtree_set}
A \emph{rooted ribbon tree} (RRT) is a tree $T$ with a choice of a root $\alpha_\root \in T$ and a cyclic ordering of the edges incident to each vertex; we orient such a tree toward the root.
We say that a vertex $\alpha$ of an RRT $T$ is \emph{interior} if the set $\incom(\alpha)$ of its incoming neighbors is nonempty, and we denote the set of interior vertices of $T$ by $T_\inte$.
An RRT $T$ is \emph{stable} if every interior vertex has at least 2 incoming edges.
We define $K^\tree_r$ to be the set of all isomorphism classes of stable rooted ribbon trees with $r$ leaves.

We denote the $i$-th leaf of an RRT $T$ by $\lambda_i^T$.
For any $\alpha, \beta \in T$, $T_{\alpha\beta}$\label{p:Talphabeta} denotes those vertices $\gamma$ such that the path $[\alpha,\gamma]$\label{p:path} from $\alpha$ to $\gamma$ passes through $\beta$.
We denote $T_\alpha \coloneqq T_{\alpha_\root\alpha}$.
\end{definition}

\begin{remark}
Ribbon trees (respectively rooted ribbon trees) are often referred to as planar trees (respectively planted trees).
\end{remark}

Next, we define a version of RRTs with internal edge lengths.

\begin{definition}
\label{def:metric_RRT}
A \emph{metric RRT} $\bigl(T,(\ell_e)\bigr)$ is the following data:
\begin{itemize}
\item
An RRT $T$.

\item
For every edge $e$ of $T$ not incident to a leaf (but possibly incident to the root), a length $\ell_e \in [0,1]$.
\end{itemize}
We call this a \emph{metric RRT of type $T$}.
\end{definition}

Now we will define a ``dimension'' function $d$ on stable RRTs.

\begin{definition}[Definition 2.4, \cite{b:2-associahedra}]
\label{def:RRT_dim}
For $T$ a stable RRT in $K_r^\tree$, we define its \emph{dimension} $d(T) \in [0,r-2]$ like so:
\begin{align}
\label{eq:RRT_dim}
d(T) \coloneqq r - \#\!T_\inte - 1.
\end{align}
\end{definition}

\begin{definition}
\label{def:C_T}
Given a stable tree $T$, the \emph{cell associated to $T$} is denoted by $C_T$ and is defined to consist of all metric RRTs of type $T$.
\end{definition}

\noindent
Note that we can canonically identify $C_T$ with the closed cube of dimension equal to the number of internal edges of $T$.
That is:
\begin{align}
C_T
\cong
[0,1]^{\#T_\inte - 1}
=
[0,1]^{r - 2 - d(T)}.
\end{align}
As we will see, $K_r^W$ is $(r-2)$-dimensional; it follows that $d(T)$ is the codimension of $C_T$ in $K_r^W$.
(The unfortunate clash of terminology between ``dimension'' and ``codimension'' is due to the fact that in $K_r$, the cell indexed by $T$ has dimension $d(T)$.)

We now define $K_r^W$ by taking the union of the cells $C_T$ for $T$ any stable RRT with $r$ leaves, then collapsing edges of length 0.

\begin{definition}
\label{def:K_r^W}
Given $r \geq 1$, we define $K_r^W$ to be the following quotient:
\begin{align}
K_r^W
\coloneqq
\Bigl(\:\bigsqcup_{T \in K_r^\tree} C_T\biggr)\Big/\sim.
\end{align}
Here $\sim$ identifies $\bigl(T,(\ell_e)\bigr)$ and $\bigl(T',(\ell'_e)\bigr)$ if, after collapsing all edges $e$ of $T$ with $\ell_e=0$ and all edges $e$ of $T'$ with $\ell_e' = 0$, both metric RRTs reduce to the same metric RRT $\bigl(T'',(\ell_e'')\bigr)$.
\end{definition}

\begin{example}
In the following figure, we depict the CW complex $K_4^W$.
\begin{figure}[H]
\centering
\def\svgwidth{0.55\columnwidth}
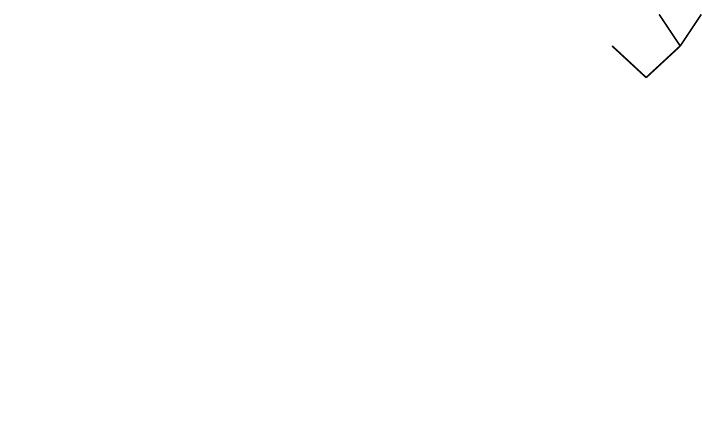
\end{figure}
\noindent
Note that this is a refinement of $K_4$, which (as a CW complex) is a pentagon.
We have labeled the open top cells by the metric stable RRTs that they parametrize, where each $a$ and $b$ is allowed to vary in $[0,1]$.
The closed top cells are glued together along the cells where some of the edge lengths are 0 --- for instance, we have indicated how the top and top-right cubes are joined along the internal edge of the pentagon where the edge length $b$ in both cells becomes 0.
The boundary of $K_r^W$ is the union of cells where at least one edge length is 1.
\end{example}

Finally, we define a simplicial refinement of the CW structure on $K_r^W$.
To approach this, we note that if $P$ is the poset $\{0,1\}^k$, where $\sigma_1 < \sigma_2$ if $\sigma_2$ can be gotten by changing some of the 0's of $\sigma_1$ to 1's, then the nerve of $P$ is a simplicial decomposition of the cube $[0,1]^k$.
More concretely, the top simplices are the sets of the form
\begin{align}
\bigl\{(x_1,\ldots,x_k) \in [0,1]^k
\:|\:
0 < x_{\sigma(1)} < \cdots < x_{\sigma(k)} < 1\bigr\},
\end{align}
where $\sigma$ is a permutation on $k$ letters.
The remaining simplices are the result of replacing some of these inequalities by equalities.


\begin{definition}
We refine the CW structure on $K_r^W$ by decomposing each cell $C_T$ in $K_r^W$ like so: we make the identification $C_T \cong [0,1]^{r-2-d(T)}$, then perform the simplicial decomposition described in the previous paragraph.
This refinement equips $K_r^W$ with a simplicial decomposition.
\end{definition}

\begin{example}
In the following figure, we depict the simplicial complex $K_4^W$.
\begin{figure}[H]
\centering
\def\svgwidth{0.5\columnwidth}
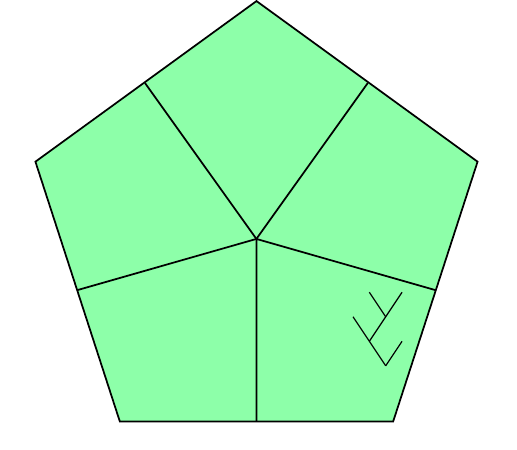
\end{figure}
\noindent
This is the refinement of our initial, cubical, CW decomposition of $K_r^W$ gotten by subdividing each of the five squares into two triangles.
We indicate the new edges by coloring them blue.
\end{example}

Now that we have constructed the spaces $K_r^W$, we can prove Prop.\ 2.1, which states that $\bigl(K_r^W\bigr)$ is a non-$\Sigma$ operad and that the operad maps are cellular.

\begin{definition}
\label{def:K_r_composition}
Fix $r$, $s$, and $i \in [1,r]$.
We wish to define the composition map
\begin{align}
\circ_i
\colon
K_r^W \times K_s^W \to K_{r+s-1}^W.
\end{align}
We do so cell by cell.
That is, fix cells $C_T \subset K_r^W$ and $C_{T'} \subset K_s^W$.
Define $T''$ to be the result of grafting $T'$ to the $i$-th leaf of $T$.
Then we define $\circ_i$ on $C_T \times C_{T'}$ like so: given collections of edge lengths on $T$ and $T'$, combine them to produce a collection of edge lengths on $T''$, where we assign to the single newly-formed interior edge the length 1.
\end{definition}

\begin{proof}[Proof of Prop.~\ref{prop:K_r^W_operad}]
Fix $r$, $s$, and $i \in [1,r]$, and consider the composition map
\begin{align}
\circ_i
\colon
K_r^W \times K_s^W \to K_{r+s-1}^W.
\end{align}
To show that $\circ_i$ is cellular, let's consider the restriction of $\circ_i$ to a product $C_T \times C_{T'}$ of closed cubes, for $T \in K_r$ and $T' \in K_s$.
Denote by $T''$ the tree obtained by grafting the root of $T'$ to the $i$-th leaf of $T$.
Then $\circ_i$ includes $C_T \times C_{T'}$ into $C_{T''}$ as the face gotten by requiring the outgoing edge of the root of $T'$ to have length 1.
The CW structure of this face of $C_{T''}$ is finer than that of $C_T \times C_{T''}$, so $\circ_i$ is indeed cellular. 
\end{proof}

\subsection{Metric tree-pairs and the definition of $W_\bn^W$}

Just as we defined $K_r^W$ to be the parameter space of metric stable RRTs, we will define $W_\bn^W$ to parametrize metric stable tree-pairs.
The definition of metric stable tree-pairs is somewhat involved, so we devote the current subsection to this definition.

Before defining metric stable tree-pairs, we recall the definition of stable tree-pairs.

\begin{definition}[Def.~3.1, \cite{b:2-associahedra}]
\label{def:2-associahedra}
A \emph{stable tree-pair of type $\bn$} is a datum $2T = T_b \sr{f}{\to} T_s$, with $T_b, T_s, f$ described below:
\begin{itemize}
\item
The \emph{bubble tree} $T_b$ is an RRT whose edges are either solid or dashed, which must satisfy these properties:
\begin{itemize}
\item
The vertices of $T_b$ are partitioned as $V(T_b) = V_\comp \sqcup V_\seam \sqcup V_\mk$, where:
\begin{itemize}
\item
every $\alpha \in V_\comp$ has $\geq 1$ solid incoming edge, no dashed incoming edges, and either a dashed or no outgoing edge;

\item
every $\alpha \in V_\seam$ has $\geq 0$ dashed incoming edges, no solid incoming edges, and a solid outgoing edge; and

\item
every $\alpha \in V_\mk$ has no incoming edges and either a dashed or no outgoing edge.
\end{itemize}
We partition $V_\comp \eqqcolon V_\comp^1 \sqcup V_\comp^{\geq2}$ according to the number of incoming edges of a given vertex.

\item
({\sc stability})
If $\alpha$ is a vertex in $V_\comp^1$ and $\beta$ is its incoming neighbor, then $\#\!\incom(\beta) \geq 2$; if $\alpha$ is a vertex in $V_\comp^{\geq2}$ and $\beta_1,\ldots,\beta_\ell$ are its incoming neighbors, then there exists $j$ with $\#\!\incom(\beta_j) \geq 1$.
\end{itemize}
	
\item
The \emph{seam tree} $T_s$ is an element of $K_r^\tree$.

\item
The \emph{coherence map} is a map $f\colon T_b \to T_s$ of sets having these properties:
\begin{itemize}
\item
$f$ sends root to root, and if $\beta \in \incom(\alpha)$ in $T_b$, then either $f(\beta) \in \incom(f(\alpha))$ or $f(\alpha) = f(\beta)$.

\item
$f$ contracts all dashed edges, and every solid edge whose terminal vertex is in $V_\comp^1$.

\item
For any $\alpha \in V_\comp^{\geq2}$, $f$ maps the incoming edges of $\alpha$ bijectively onto the incoming edges of $f(\alpha)$, compatibly with $<_\alpha$ and $<_{f(\alpha)}$.

\item
$f$ sends every element of $V_\mk$ to a leaf of $T_s$, and if $\lambda_i^{T_s}$ is the $i$-th leaf of $T_s$, then $f^{-1}\{\lambda_i^{T_s}\}$ contains $n_i$ elements of $V_\mk$, which we denote by $\mu_{i1}^{T_b},\ldots,\mu_{in_i}^{T_b}$.
\end{itemize}
\end{itemize}
We denote by $W_\bn^\tree$\label{p:Wntree} the set of isomorphism classes of stable tree-pairs of type $\bn$.
Here an isomorphism from $T_b \sr{f}{\to} T_s$ to $T_b' \sr{f'}{\to} T_s'$ is a pair of maps $\varphi_b\colon T_b \to T_b'$ and $\varphi_s\colon T_s \to T_s'$ that fit into a commutative square in the obvious way and that respect all the structure of the bubble trees and seam trees.
\end{definition}

Next, we define metric stable tree-pairs.
This notion is more subtle than that of metric stable RRTs, because we must impose conditions on the edge-lengths.
(This should be compared with \cite[\S3]{bo}, where Bottman--Oblomkov imposed similar constraints in order to define local charts on a complexified version of $W_\bn$.)

\begin{definition}
\label{def:metric_stable_tree-pair}
A \emph{metric stable tree-pair} $\bigl(2T, (L_e), (\ell_e)\bigr)$ is the following data:
\begin{itemize}
\item
A stable tree-pair $2T$.

\item
For every interior dashed edge $e$ of $T_b$, a length $L_e \in [0,1]$, and for every interior edge $e$ of $T_s$, a length $\ell_e \in [0,1]$, subject to the following coherence conditions (where for convenience, we set $L_\alpha \coloneqq L_e$ for $\alpha \in V_\comp(T_b)\setminus\{\alpha_\root\}$ and $e$ the outgoing edge of $\alpha$, and similarly for the edge-lengths in $T_s$):
\begin{itemize}
\item
For every $\alpha_1,\alpha_2 \in V_\comp^{\geq2}(T_b)$ and $\beta \in V_\comp^1(T_b)$ with $f(\alpha_1) = f(\alpha_2) = f(\beta)$, we require:
\begin{align}
\label{eq:coherences_1}
\max_{\gamma \in [\alpha_1,\beta)} L_\gamma
=
\max_{\gamma \in [\alpha_2,\beta)} L_\gamma.
\end{align}

\item
For every $\rho \in V_\inte(T_s)\setminus\{\rho_\root\}$ and $\alpha \in V_\comp^{\geq2}(T_b)$ with $f(\alpha) = \rho$, we require:
\begin{align}
\label{eq:coherences_2}
\ell_\rho
=
\max_{\gamma \in [\alpha,\beta_\alpha)} L_\gamma,
\end{align}
where we define $\beta_\alpha$ to be the first element of $V_\comp^{\geq2}(T_b)$ that the path from $\alpha$ to $\alpha_\root$ passes through.
\end{itemize}
\end{itemize}
\end{definition}

Finally, we recall the \emph{dimension} of a stable tree-pair.
Similarly to the dimension of a stable RRT, this will be the codimension in $W_\bn^W$ of the cell corresponding to the stable tree-pair in question.

\begin{definition}[Definition 3.3, \cite{b:2-associahedra}]
\label{def:tree-pair_dim}
For $2T$ a stable tree-pair, we define the \emph{dimension} $d(2T) \in [0,|\bn|+r-3]$ like so:
\begin{align} \label{eq:tree-pair_dim}
d(2T) \coloneqq |\bn| + r - \#\!V^1_\comp(T_b) - \#\!(T_s)_\inte - 2.
\end{align}
\end{definition}

We are now prepared to define $W_\bn^W$, the ``W-version'' of the 2-associahedron.
We will define $W_\bn^W$ by attaching together the cells $C_{2T}$, which consist of metric stable tree-pairs. 

\begin{definition}
\label{def:C_2T}
Given a stable tree-pair $2T$, the \emph{cell associated to $2T$} is the collection of all metric stable tree-pairs of type $2T$.
We denote this cell by $C_{2T}$.
\end{definition}

\noindent
Note that we can identify $C_{2T}$ with the subset of the cube $[0,1]^k$ defined by the equalities \eqref{eq:coherences_1} and \eqref{eq:coherences_2}, where $k$ is the number of interior dashed edges of $T_b$ plus the number of interior edges of $T_s$.

\begin{definition}
\label{def:W_n^W}
Fix $r \geq 1$ and $\bn \in \bZ^r_{\geq0} \setminus \{\bzero\}$.
We define $W_\bn^W$ similarly to how we defined $K_r^W$ in Def.~\ref{def:K_r^W}:
\begin{align*}
W_\bn^W
\coloneqq
\biggl(\:\bigsqcup_{2T \in W_\bn^\tree} C_{2T}\biggr)\Big/\sim.
\end{align*}
The quotient here is somewhat subtler than the quotient that appeared in Def.~\ref{def:K_r^W}, specifically when it comes to $T_b$.
In $T_s$, we simply contract any edges of length 0.
We indicate in the following figure how to perform the necessary contractions in $T_b$ when some edge-lengths are 0:
\begin{figure}[H]
\centering
\def\svgwidth{0.9\columnwidth}
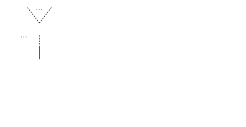
\end{figure}
\noindent
The reader should think of the left contraction as undoing a type-1 move (as in \cite[\S3.1]{b:2-associahedra}), whereas the right contraction undoes either a type-2 or a type-3 move.
Note that we are using the coherences enforced in Def.~\ref{def:metric_stable_tree-pair} --- for instance, these mean that we do not have to consider a situation as in the right-hand side of the above figure, but where only some of the edge-lengths in this portion of $T_b$ are 0.
\end{definition}

\begin{example}
In the following figure, we depict the CW complex $W_{21}^W$.
\begin{figure}[H]
\centering
\def\svgwidth{1.0\columnwidth}
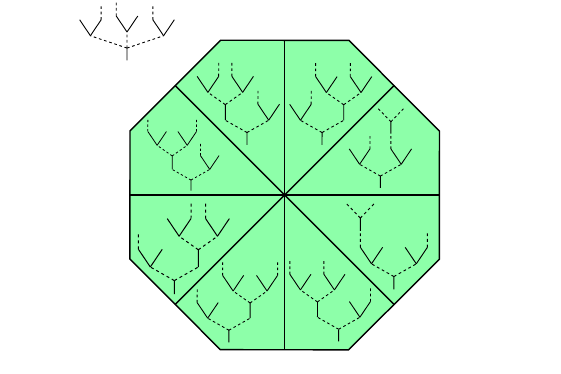
\end{figure}
\noindent
Each of the parameters $a$ and $b$ lie in $[0,1]$; they do not have the same meaning across different cells.
The eight interior edges (resp.\ sixteen boundary edges) correspond to the loci in the top cells where a parameter goes to 0 (resp.\ to 1).
\end{example}

Finally, we refine the CW structure on $W_\bn^W$ to a simplicial decomposition.

\begin{lemma}
\label{lem:W^W_simplicial}
Fix a stable tree-pair $2T$.
For every simplex $S$ in the standard simplicial decomposition of $[0,1]^k \supset C_{2T}$, $S$ is either contained in $C_{2T}$ or disjoint from it.
The collection of such simplices that are contained in $C_{2T}$ form a simplicial decomposition of $C_{2T}$.
\end{lemma}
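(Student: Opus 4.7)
The plan is to translate the defining ``max~=~max'' equations of $C_{2T}$ in $[0,1]^k$ into purely combinatorial conditions on the block structure of the standard simplicial decomposition, and then to verify that these conditions behave monotonically under the face relation.

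For setup, recall that each open simplex $S$ of the standard decomposition of $[0,1]^k$ is specified by an ordered set-partition
\[
\{1,\dots,k\} = B_0 \sqcup B_1 \sqcup \cdots \sqcup B_p \sqcup B_{p+1},
\]
in which $B_0$ collects the coordinates forced to $0$, $B_{p+1}$ those forced to $1$, and on the interior of $S$ all coordinates in $B_j$ share a common value $v_j$ subject only to $0 < v_1 < \cdots < v_p < 1$. Setting $v_0 := 0$ and $v_{p+1} := 1$, for any nonempty subset $I$ of coordinates we then have $\max_{\gamma \in I} x_\gamma = v_{b_S(I)}$ identically on $S$, where $b_S(I) := \max\{j : B_j \cap I \neq \emptyset\}$.

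The first step is to analyze each defining equation on $S$. After rewriting $\ell_\rho$ as the max over the singleton $\{\rho\}$, each of \eqref{eq:coherences_1} and \eqref{eq:coherences_2} takes the form $\max_{\gamma \in I_1} x_\gamma = \max_{\gamma \in I_2} x_\gamma$ for some coordinate subsets $I_1, I_2$. On $S$ this reduces to $v_{b_S(I_1)} = v_{b_S(I_2)}$, which -- since the values $v_0 < v_1 < \cdots < v_{p+1}$ are strictly ordered throughout the open simplex -- holds pointwise on $S$ if and only if $b_S(I_1) = b_S(I_2)$. Thus whether a given equation holds on $S$ is a purely combinatorial condition: either all defining equations hold identically, in which case $S \subseteq C_{2T}$, or at least one fails at every interior point, in which case $S$ is disjoint from $C_{2T}$. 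This yields the first claim of the lemma.

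The second step is to verify that the collection of open simplices contained in $C_{2T}$ is closed under the face relation, so that these simplices assemble into a subcomplex of the standard decomposition of $[0,1]^k$ whose realization is $C_{2T}$. A face of $S$ corresponds to a coarsening of the ordered partition -- merging two adjacent blocks, absorbing $B_1$ into $B_0$, or absorbing $B_p$ into $B_{p+1}$. If $b_S(I_1) = b_S(I_2) = j$, then neither $I_1$ nor $I_2$ meets any block of $S$ strictly above $B_j$; after coarsening, neither meets any block strictly above the image of $B_j$, so the two block-indices on the face still agree. Hence every defining equation holding on $S$ also holds on every face of $S$. The main technical obstacle is exactly this face-closure check, and in particular the edge cases where the ``active'' block $B_j$ gets absorbed into the $0$- or $1$-block; but in every such case both sides of each relevant equation simultaneously collapse to the new block value, so the equality is preserved.
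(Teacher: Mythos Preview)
Your proof is correct and follows essentially the same approach as the paper's: both observe that on an open simplex of the standard decomposition, each side of a coherence equation \eqref{eq:coherences_1}--\eqref{eq:coherences_2} reduces to a single coordinate value, so the equation either holds identically or nowhere. Your treatment is considerably more explicit than the paper's---you formalize the block structure via the ordered partition $(B_0,\ldots,B_{p+1})$ and the index function $b_S(I)$, and you spell out the face-closure argument combinatorially, whereas the paper simply asserts that the simplicial decomposition ``follows immediately'' (which is justified, since $C_{2T}$ is closed and hence the closure of any contained open simplex, i.e.\ all its faces, also lies in $C_{2T}$).
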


\begin{proof}
Fix a simplex $S$.
$S$ is defined by a collection of equalities and inequalities of the form
\begin{align}
\label{eq:simplex_conditions}
0 \:*\: x_{\sigma(1)} \:*\: \cdots \:*\: x_{\sigma(k)} \:*\: 1,
\end{align}
where each ``$*$'' is either a ``$<$'' or an ``$=$'' and where $\sigma$ is a permutation on $k$ letters.
After imposing these (in)equalities, the left- and right-hand sides of the equalities \eqref{eq:coherences_1} and \eqref{eq:coherences_2} become single variables.
This collection of equalities will either be always satisfied or never satisfied, depending on the constraints in \eqref{eq:simplex_conditions}.
Depending on which of these is the case, $S$ is either contained in $C_{2T}$ or disjoint from it.

It follows immediately that the collection of simplices that are contained in $C_{2T}$ form a simplicial decomposition of $C_{2T}$.
\end{proof}

\begin{example}
In the following figure, we illustrate the closed cell in $W_{40}^W$ associated to the underlying tree-pair of the (top-dimensional) metric tree-pair shown on the right:
\begin{figure}[H]
\centering
\def\svgwidth{1.0\columnwidth}
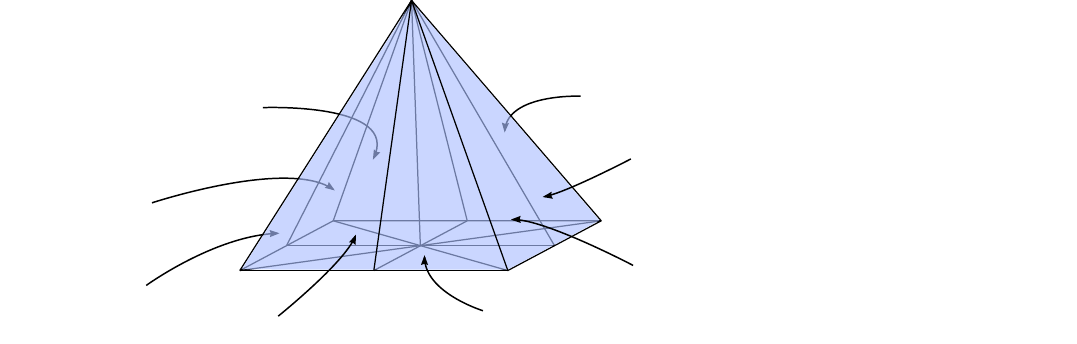
\end{figure}
\noindent
The restriction on the lengths $a,b,c,d \in [0,1]$ is that they must satisfy $\max(a,b) = \max(c,d)$; as a result, this cell has the CW type of a square pyramid.

We indicate the simplicial refinement of this cell: the square pyramid is subdivided into eight 3-simplices, which are defined by imposing inequalities and equalities as shown in this figure.
\end{example}

\section{The construction of $\FM_2^W$}
\label{s:FM_2^W}

In this section, the d\'enouement of this paper, we will construct a collection of CW complexes $\bigl(\FM_2^W(k)\bigr)_{k\geq1}$ and a collection of operations
\begin{align}
\circ_i
\colon
\FM_2^W(k) \times \FM_2^W(\ell)
\to
\FM_2^W(k+\ell-1),
\end{align}
such that these data form an operad.

We will now give an overview of our construction of $\FM_2^W(k)$.
This is an expansion of Step 2 in the overview we gave in \S\ref{ss:overview}, and we label the parts accordingly:

\bigskip

\noindent
{\bf 2a.}
Each open Getzler--Jones cell in $\FM_2(k)$ can be identified with a product of open 2-associahedra, i.e.\ a product of the form $\mr W_{\bm^1} \times \cdots \times \mr W_{\bm^a}$ (where ``$\mr X$'' is our notation for the interior of a space $X$).
For each such open cell, we replace these 2-associahedra by their W-construction equivalents, thusly: $\mr W_{\bm^1}^W \times \cdots \times \mr W_{\bm^a}^W$.
This product comes with the product CW structure, and we refine this in a way that endows $\mr W_{\bm^1}^W \times \cdots \times \mr W_{\bm^a}^W$ with the structure of a simplicial complex.

\medskip

\noindent
{\bf 2b.}
While an open Getzler--Jones cell can be identified with a product $\mr W_{\bm^1} \times \cdots \times \mr W_{\bm^a}$ of 2-associahedra, their compactifications (in $\FM_2(k)$ and $W_{\bm^1}\times\cdots\times W_{\bm^a}$, respectively) are different: the compactification of the former is smaller than the compactification of the latter.
This is reflected in how we glue our products $\mr W_{\bm^1}^W \times \cdots \times \mr W_{\bm^a}^W$ together.
Specifically, we perform this gluing by applying a quotient map to each simplex in the boundary of $W_{\bm^1}^W \times \cdots \times W_{\bm^a}^W$.
This quotient map is closely related to the maps $f_\sigma\colon W_\bn \to \FM_2(k)$ that we described in \S\ref{ss:relationship}: they reflect the fact that the compactification used to define $W_\bn$ allows lines with no marked points, whereas the compactification of a Getzler--Jones cell does not allow this.

\bigskip

The following is the main result of this section, which we stated in the introduction and record again here:

\medskip

\noindent
{\bf Main theorem.}
{\it The spaces $\bigl(\FM_2^W(k)\bigr)_{k\geq1}$ together with the composition operations $\circ_i$ defined in Def.~\ref{def:FM_2^W_composition} form a non-$\Sigma$ operad, and the composition maps
\begin{align}
\circ_i
\colon
\FM_2^W(k) \times \FM_2^W(\ell)
\to
\FM_2^W(k+\ell-1)
\end{align}
are cellular.}

\bigskip

\begin{proof}
Combine Lemmata \ref{lem:it's_an_operad} and \ref{lem:compositions_are_cellular} below.
\end{proof}

\subsection{Quotient maps on 2-associahedra}

Before we can define the quotient involved in \eqref{eq:FM_2^W_definition}, we will define for every cell $F$ in $\partial W_\bn^W$ a map $q_F$ from $F$ to a certain product of 2-associahedra, where this target will vary for difference choices of $F$.
We begin with two preliminary definitions.

\begin{definition}
\label{def:pi^tree}
Fix $r \geq 1$ and $\bn \in \bZ_{\geq0}^r\setminus\{\bzero\}$, and fix $i \in [1,r]$ such that $n_i = 0$.
Define $\wt\bn \coloneqq (n_1,\ldots,n_{i-1},n_{i+1},\ldots,n_r)$.
We then define a map of posets $\pi_i^\tree\colon W_\bn^\tree \to W_{\wt\bn}^\tree$ by applying the following procedure to $2T = T_b \sr{f}{\to} T_s \in W_\bn^\tree$:
\begin{itemize}
\item[1.]
Denote by $e_0$ the edge in $T_s$ incident to the $i$-th leaf $\lambda_i^{T_s}$.
If $e$ is a solid edge in $T_b$ that is mapped identically under $f$ to $e_0$, then we delete $e$.
Next, we delete $e_0$.
We modify $f$ in the obvious way.

\medskip

\item[2.]
After performing these deletions, our tree-pair may no longer be stable.
We rectify this in $T_b$ resp.\ $T_s$ by performing the contractions indicated on the left resp.\ right:
\begin{figure}[H]
\centering
\def\svgwidth{0.4\columnwidth}
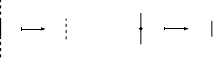
\end{figure}
\noindent
More specifically, we perform these contractions as many times as necessary for the tree-pair to be stable.
\end{itemize}
\noindent
Denoting the end result of this procedure by $\wt{2T}$, we define $\pi_i^\tree(2T) \coloneqq \wt{2T}$.

Next, we define another map of posets.
Fix $r \geq 1$ and $\bn \in \bZ_{\geq0}^r\setminus\{\bzero\}$.
Denote by $\wt\bn$ the result of deleting all the zeroes from $\bn$, and set $\wt r$ to be the length of $\wt\bn$.
We define $\pi^\tree\colon W_\bn^\tree \to W_{\wt\bn}^\tree$ by applying the map $\pi_i^\tree$ once for each $i$ with $n_i = 0$.
\end{definition}

\noindent
It is not hard to check that the choices implicit in this definition do not matter, and that the resulting maps are indeed maps of posets.

\begin{definition}
\label{def:pi^W}
Fix $r \geq 1$ and $\bn \in \bZ^r_{\geq0}\setminus\{\bzero\}$.
We define a map $\pi^W\colon W_\bn^W \to W_{\wt\bn}^W$ in the same fashion as $\pi^\tree$, with the provision that when we contract adjacent edges of lengths $\ell_1$ and $\ell_2$ (whether in $T_b$ or $T_s$), we equip the resulting edge with length $\max(\ell_1,\ell_2)$.
\end{definition}

Next, we recall a W-version analogue of two properties of the 2-associahedra.

\medskip

\noindent
{\bf W-version analogue of} \textsc{(forgetful)} {\bf property of Theorem 4.1, \cite{b:2-associahedra}.}
Fix $r \geq 1$ and $\bn \in \bZ^r_{\geq0}\setminus\{\bzero\}$.
There is a surjection $W_\bn^W \to K_r^W$, which sends a metric stable tree-pair $\bigl(T_b \sr{f}{\mapsto} T_s, (L_e), (\ell_e)\bigr)$ to the metric stable RRT $\bigl(T_s, (\ell_e)\bigr)$.

\medskip

\noindent
{\bf W-version analogue of} \textsc{(recursive)} {\bf property of Theorem 4.1, \cite{b:2-associahedra}.}
Fix a stable tree-pair $2T = T_b \sr{f}{\to} T_s \in W_\bn^\tree$.
There is an inclusion of CW complexes
\begin{align}
\Gamma_{2T} \colon \prod_{
{\alpha \in V_\comp^1(T_b),}
\atop
{\incom(\alpha)=(\beta)}
} W_{\#\!\incom(\beta)}^W
\times
\prod_{\rho \in V_\inte(T_s)} \prod^{K_{\#\!\incom(\rho)}^W}_{
{\alpha\in V_\comp^{\geq2}(T_b)\cap f^{-1}\{\rho\},}
\atop
{\incom(\alpha)=(\beta_1,\ldots,\beta_{\#\!\incom(\rho)})}
}
\hspace{-0.25in} W^W_{\#\!\incom(\beta_1),\ldots,\#\!\incom(\beta_{\#\!\incom(\alpha)})}
\hookrightarrow W_\bn^W,
\end{align}
where the superscript on one of the product symbols indicates that it is a fiber product with respect to the maps described in \textsc{(forgetful)}.

The map $\Gamma_{2T}$ defined in \cite{b:2-associahedra}, which is defined for the posets $W_\bn^\tree$, is defined by attaching stable tree-pairs together in a way specified by the stable tree-pair $2T$.
This map is similar, but we are attaching together \emph{metric} stable tree-pairs.
We assign the length 1 to the edges along which we attach the trees.
(The image of $\Gamma_{2T}$ is a union of cells in $\partial W_\bn^W$.)

\medskip

We can now define the quotient maps $q_F$ on $W_\bn^W$.

\begin{definition}
\label{p:q_F}
Fix $r \geq 1$, $\bn \in \bZ_{\geq0}^r\setminus\{\bzero\}$, a stable type-$\bn$ tree-pair $\wt{2T}$, and a face $F$ of the associated cell $C_{\wt{2T}}$ in $W_\bn^W$ with the property that $F$ lies in $\partial W_\bn^W$.
(Equivalently, the metric tree-pairs in $F$ have at least one length that is identically equal to 1.)
The \emph{quotient map associated to $F$} is a map $q_F$ from $F$ to a product of 2-associahedra.
Given a metric stable tree-pair $\bigl(2T,(L_e),(\ell_e)\bigr)$, we define its image under $\pi$ in the following fashion:
\begin{itemize}
\item[1.]
Break up $T_b$ and $T_s$ along the edges that are identically 1 in $F$.
Equivalently, choose $2T$ of minimal dimension with the property that $F$ lies in the image of $\Gamma_{2T}$, then identify $F$ as a top cell in a product of fiber products of the following form:
\begin{align}
\prod_{
{\alpha \in V_\comp^1(T_b),}
\atop
{\incom(\alpha)=(\beta)}
} W_{\#\!\incom(\beta)}^W
\times
\prod_{\rho \in V_\inte(T_s)} \prod^{K_{\#\!\incom(\rho)}^W}_{
{\alpha\in V_\comp^{\geq2}(T_b)\cap f^{-1}\{\rho\},}
\atop
{\incom(\alpha)=(\beta_1,\ldots,\beta_{\#\!\incom(\rho)})}
}
\hspace{-0.25in} W^W_{\#\!\incom(\beta_1),\ldots,\#\!\incom(\beta_{\#\!\incom(\alpha)})}.
\end{align}
As a result, we obtain a list of metric stable tree-pairs, which we can regard as lying inside a product $W_{\bm^1}^W \times \cdots \times W_{\bm^a}^W$.

\medskip

\item[2.]
We then apply the map $\pi^W$ to each of the factors in the product just recorded, hence producing an element of $W_{\wt{\bm^1}}^W \times \cdots \times W_{\wt{\bm^a}}^W$.
(As in Defs.~\ref{def:pi^tree} and \ref{def:pi^W}, $\wt{\bm^i}$ denotes the result of removing the 0's from $\bm^i$.)
\end{itemize}
\end{definition}

\noindent
Note that for two cells $F_1, F_2$ in the boundary of $W_\bn^W$, the targets of $q_{F_1}$ and $q_{F_2}$ are typically different.

\begin{example}
In the following figure, we illustrate several things about $W_{21}^W$:
\begin{figure}[H]
\centering
\def\svgwidth{0.9\columnwidth}
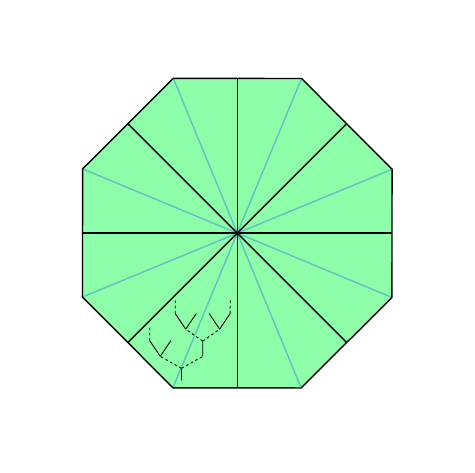
\end{figure}
\noindent
Initially, $W_{21}^W$ is an octagon, decomposed into eight squares; this is indicated by the black lines.
The simplicial refinement divides each square into two 2-simplices.
We have indicated the metric tree-pairs that correspond to each of the eight squares, as well as those corresponding to the sixteen 1-simplices that comprise $\partial W_{21}^W$.
(Some dashed edges are not labeled; these should be interpreted as having length $\max(a,b)$.)

Finally, we have indicated the behavior of the quotient maps on $W_{21}^W$.
This map are the identity on every edge except for those indicated in red.
Each pair of red edges is contracted to a point.
One reflection of this is that in Ex.~\ref{ex:W_111}, the octagons in $W_{111}$ are taken to the (cellular) hexagons in the Getzler--Jones cell indicted on the right.
\end{example}

\subsection{The construction of $\FM_2^W(k)$}
\label{ss:FM_2^W_construction}

In this subsection, we tackle the construction of $\FM_2^W(k)$.
First, we will describe our version of the Getzler--Jones cells.
Next, we will explain how to glue these spaces together.

To define the Getzler--Jones cells, we must introduce \emph{2-permutations}, which will allow us to enforce the alignment and ordering of special points on screens as in Fig.\ \ref{fig:GJ}.

\begin{definition}
\label{def:2-permutation}
Fix a finite set $A$.
A \emph{2-permutation $\sigma$ on $A$} is the following data:
\begin{itemize}
\item
An ordered decomposition
\begin{align}
A = A_1 \sqcup \cdots \sqcup A_r,
\end{align}
where $A_r$ is allowed to be empty.

\medskip

\item
For each $i$, a linear order on $A_i$.
\end{itemize}
We define the \emph{type} of $\sigma$ to be the vector $\bn \coloneqq \bigl(|A_1|,\ldots,|A_r|\bigr)$.
If $\sigma$ is a 2-permutation whose type $\bn$ has no zero entries, then we say that $\sigma$ \emph{has no empty part}.
\end{definition}

\begin{remark}
Note that a type-$\underbrace{(1,\ldots,1)}_r$ 2-permutation is exactly the data of a permutation on $r$ letters.
The same is true of a type-$(n)$ 2-permutation.
\end{remark}

\noindent
Next, we define a \emph{Getzler--Jones datum}, the set of which indexes the Getzler--Jones cells in $\FM_2^W(k)$.

\begin{definition}
\label{def:GJ_datum}
Fix $k \geq 2$.
A \emph{Getzler--Jones datum} consists of the following data:
\begin{itemize}
\item
A stable rooted tree $T$ with $k$ leaves, together with a numbering of its leaves from $1$ through $k$.

\medskip

\item
For every interior vertex $v \in T_\inte$, a 2-permutation $\sigma$ on its incoming vertices $V_\incom(T)$ such that $\sigma$ has no empty part.
\end{itemize}
We denote the type of the 2-permutation associated to $v$ by $\bn(v)$.
We will abuse notation and denote the entire Getzler--Jones datum by $T$.
\end{definition}

\noindent
Finally, we can define the \emph{Getzler--Jones cells of type $k$}.

\begin{definition}
\label{p:GJ_T}
Fix $k \geq 2$ and a Getzler--Jones datum $T$.
Then we make the following two definitions: 
\begin{align}
\GJ_T
\coloneqq
\prod_{v \in T_\inte} \mr W_{\bn(v)}^W,
\qquad
\wt\GJ_T
\coloneqq
\prod_{v \in T_\inte} W_{\bn(v)}^W.
\end{align}
We call $\GJ_T$ the \emph{Getzler--Jones cell $\GJ_T$ associated to $T$}, and we refer to $\GJ_T$ as a \emph{type-$k$ Getzler--Jones cell}.

In Lem.~\ref{lem:W^W_simplicial} we equipped $W_\bn^W$ with the structure of a simplicial complex, which induces a CW structure on $\GJ_T$ and $\wt\GJ_T$.
We refine these to equip $\GJ_T$ and $\wt\GJ_T$ with simplicial decompositions, in the fashion of Lem.~\ref{lem:W^W_simplicial}.
\end{definition}

\begin{remark}
The reason why we do not refer to $\wt\GJ_T$ as a ``closed Getzler--Jones cell'' is because it is \emph{not} the closure in $\FM_2^W(k)$ of $\GJ_T$.
In fact, it is larger than this closure.
Our reason for making this second definition is that $\wt\GJ_T$ will be an integral part of our definition of $\FM_2^W(k)$.
\end{remark}

We will define $\FM_2^W(k)$ as a quotient of the following form, where $T$ varies over type-$k$ Getzler--Jones data:
\begin{align}
\label{eq:FM_2^W_definition}
\FM_2^W(k)
\coloneqq
\Bigl(\coprod_T \: \wt\GJ_T\Bigr)\big/\sim.
\end{align}
The remaining ingredient is the collection of maps that we will use to attach these spaces.
As a consequence of the definition of these maps, $\FM_2^W(k)$ will decompose as a set into the union of all type-$k$ Getzler--Jones cells.

Finally, we come to the definition of $\FM_2^W(k)$.

\begin{definition}
Fix $k \geq 2$.
We construct $\FM_2^W(k)$ like so:
\label{p:FM_2^W_k}

\medskip

\begin{itemize}
\item[1.]
Begin with the following disjoint union, where $T$ varies over type-$k$ Getzler--Jones data:
\begin{align}
\label{eq:FM_2^W_definition_no_quotient}
\coprod_T
\wt\GJ_T.
\end{align}

\medskip

\item[2.]
Fix a type-$k$ Getzler--Jones datum $T$, and fix a cell $F$ in the boundary of $\wt\GJ_T = \prod_{v \in T_\inte} W_{\bn(v)}^W$.
$F$ lies inside a product of cells in the 2-associahedra that comprise $\wt\GJ_T$ --- that is, we may write $F \subset \prod_{v \in T_\inte} F_v \subset \prod_{v \in T_\inte} W_{\bn(v)}^W$, where $F_v$ is a cell in $W_{\bn(v)}^W$.
For every $v$, we have a map $q_v$ from $W_{\bn(v)}^W$ to a product of 2-associahedra; by combining these, we obtain a map from $F$ to a product of 2-associahedra.
In fact, we can regard the target of this map as a Getzler--Jones cell.

\medskip

\item[3.]
We take the quotient of the disjoint union in \eqref{eq:FM_2^W_definition_no_quotient} by attaching the constituent spaces together via the maps we defined in the last step.
\end{itemize}
We define $\FM_2^W(1)$ to be a point.
\end{definition}

\noindent
It is a consequence of the simplicial structure of the $\wt\GJ_T$'s that each $\FM_2^W(k)$ has the structure of a CW complex.
As noted above, a result of our definition is that $\FM_2^W(k)$ decomposes as a union of Getzler--Jones cells, over all Getzler--Jones data of type $k$.

\subsection{The operad structure on $\FM_2^W$}

%
%
%
%



\begin{definition}
\label{def:FM_2^W_composition}
Fix $k$, $\ell$, and $i \in [1,k]$.
We wish to define the
map
\begin{align}
\circ_i
\colon
\FM_2^W(k) \times \FM_2^W(\ell) \to \FM_2^W(k+\ell-1).
\end{align}
To do so, fix Getzler--Jones data $T$ and $T'$ of types $k$ and $\ell$, respectively, and fix cells $F \subset \GJ_T$ and $F' \subset \GJ_{T'}$.
We will define $\circ_i$ on
\begin{align}
\GJ_T \times \GJ_{T'}
=
\prod_{v \in T_\inte \sqcup T'_\inte}
\rm W_{\bn(v)}^W.
\end{align}
Define $T''$ to be the result of grafting $T'$ to the $i$-th leaf of $T$, and completing it to a Getzler--Jones datum in the obvious way.
We define $\circ_i$ on $\GJ_T\times\GJ_{T'}$ to be the identification of $\GJ_T \times \GJ_{T'}$ with $\GJ_{T''}$.
\end{definition}

\begin{lemma}
\label{lem:it's_an_operad}
Taken together, the spaces $\bigl(\FM_2^W(k)\bigr)_{k\geq1}$ together with the composition operations $\circ_i$ form a non-$\Sigma$ operad.
\end{lemma}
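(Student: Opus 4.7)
The plan is to verify the two defining axioms of a non-$\Sigma$ operad: the existence of a two-sided identity in arity $1$, and the associativity of the composition operations $\circ_i$. My approach will be first to check both axioms on the open Getzler--Jones cells using the tree-grafting description in Def.~\ref{def:FM_2^W_composition}, and then to show that everything descends to the quotient $\sim$ of \eqref{eq:FM_2^W_definition}.

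For the identity, I take $\mathrm{id}\in\FM_2^W(1)$ to be the unique point. The axioms $\mathrm{id}\circ_1 x = x$ and $x\circ_i\mathrm{id} = x$ correspond, under the tree-grafting description, to grafting the trivial one-leaf tree onto the $i$-th leaf (resp.\ below the root) of the underlying Getzler--Jones datum $T$. Since the trivial one-leaf tree has no interior vertex and carries no 2-permutation, this grafting leaves $T_\inte$ and all of its associated 2-permutations unchanged, so the induced identification between products $\prod_{v\in T_\inte}\mr W_{\bn(v)}^W$ is trivial and both identity axioms are immediate.

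For associativity I will verify both the sequential form $(x\circ_i y)\circ_{i+j-1} z = x\circ_i (y\circ_j z)$ and the parallel form $(x\circ_i y)\circ_{j+\ell-1} z = (x\circ_j z)\circ_i y$ for $j>i$, where $\ell$ denotes the arity of $y$. On the open Getzler--Jones cells both are purely combinatorial: the two orders of grafting three Getzler--Jones data produce canonically isomorphic trees with canonically identified 2-permutations, and Def.~\ref{def:FM_2^W_composition} identifies the corresponding products of open 2-associahedra accordingly. Nothing beyond the standard associativity of tree-grafting is involved at this level.

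The main obstacle I expect is showing that $\circ_i$ is well-defined after passage to the quotient $\sim$ used to build $\FM_2^W(k)$ in \eqref{eq:FM_2^W_definition}. I will argue that the gluing maps built out of the $q_F$ of Def.~\ref{p:q_F} act factor-by-factor over the interior vertices of each Getzler--Jones datum, so that the equivalence relation on $\wt\GJ_{T\circ_i T'}$, restricted to the image of $\wt\GJ_T\times\wt\GJ_{T'}$, is generated by the two equivalence relations on $\wt\GJ_T$ and $\wt\GJ_{T'}$ independently. Given this factorization, $\circ_i$ descends to the quotients, after which the identity and associativity statements verified on the level of open cells extend to all of $\FM_2^W$.
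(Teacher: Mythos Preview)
Your verification of the identity and associativity axioms via tree grafting is correct and matches the paper's (much terser) argument: the paper simply writes ``This is immediate from the definition,'' relying on the fact that Def.~\ref{def:FM_2^W_composition} reduces $\circ_i$ on each product of open Getzler--Jones cells to the standard tree-grafting operation, for which the operad axioms are transparent.

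However, the ``main obstacle'' you identify is not quite placed correctly. The composition $\circ_i$ in Def.~\ref{def:FM_2^W_composition} is defined directly on $\FM_2^W(k)\times\FM_2^W(\ell)$, cell by cell on products $\GJ_T\times\GJ_{T'}$ of \emph{open} Getzler--Jones cells. Since (as the paper notes immediately after the construction) these open cells disjointly cover the quotient space $\FM_2^W(k)$, there is no well-definedness issue at the set-theoretic level: $\circ_i$ is never defined on the pre-quotient $\coprod_T\wt\GJ_T$ and then descended through $\sim$. What you may actually be worried about is \emph{continuity} of $\circ_i$ across the closures of the cells, and your factor-by-factor observation about the gluing maps $q_F$ is indeed relevant to that --- but in the paper's organization this concern is absorbed into Lemma~\ref{lem:compositions_are_cellular}, which establishes cellularity (and hence continuity) separately. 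So your extra paragraph is not wrong, but it is addressing a point that the paper has deferred to the next lemma rather than one that belongs to the present statement.
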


\begin{proof}
This is immediate from the definition.
\end{proof}

\begin{lemma}
\label{lem:compositions_are_cellular}
The composition maps
\begin{align}
\circ_i
\colon
\FM_2^W(k) \times \FM_2^W(\ell)
\to
\FM_2^W(k+\ell-1)
\end{align}
are cellular.
\end{lemma}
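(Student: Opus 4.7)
The plan is to mimic the proof of Prop.~\ref{prop:K_r^W_operad}. Fix type-$k$ and type-$\ell$ Getzler--Jones data $T$ and $T'$, and let $T''$ be the grafting of $T'$ onto the $i$-th leaf of $T$ as in Def.~\ref{def:FM_2^W_composition}. Grafting introduces no new interior vertex and preserves all existing 2-permutations, so as decorated sets $T''_\inte = T_\inte \sqcup T'_\inte$. Consequently there is a tautological equality
\begin{align*}
\wt\GJ_T \times \wt\GJ_{T'}
\;=\; \prod_{v \in T_\inte} W_{\bn(v)}^W \times \prod_{v' \in T'_\inte} W_{\bn(v')}^W
\;=\; \prod_{w \in T''_\inte} W_{\bn(w)}^W
\;=\; \wt\GJ_{T''},
\end{align*}
and by Def.~\ref{def:FM_2^W_composition} the restriction of $\circ_i$ to $\wt\GJ_T \times \wt\GJ_{T'}$ is precisely this identification.

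My second step is to verify that this identification is cellular with respect to the simplicial structures. By Def.~\ref{p:GJ_T} and Lem.~\ref{lem:W^W_simplicial}, the simplicial structure on both sides is obtained from the product CW structure on a common product $\prod_w W_{\bn(w)}^W$ by applying the same cube-to-simplex refinement of Lem.~\ref{lem:W^W_simplicial}. Since this procedure depends only on the individual factors $W_{\bn(w)}^W$ and is insensitive to whether the indexing set is organized as a disjoint union $T_\inte \sqcup T'_\inte$ or as $T''_\inte$, the two simplicial structures literally coincide. In particular, $\circ_i$ carries each simplex bijectively to a simplex at the level of $\wt\GJ$'s.

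Third, I would check that this map descends through the quotients defining $\FM_2^W$. A boundary face of $\wt\GJ_T$ has the form $\prod_{v \in T_\inte} F_v$, and its attaching map into $\FM_2^W(k)$ is the product $\prod_v q_{F_v}$, where by Def.~\ref{p:q_F} each $q_{F_v}$ acts only on its single factor $W_{\bn(v)}^W$ via the map $\pi^W$ of Def.~\ref{def:pi^W}. Since grafting only appends new factors indexed by $T'_\inte$ without altering those indexed by $T_\inte$, the factor-wise quotients on $\wt\GJ_T \times \wt\GJ_{T'}$ agree with the corresponding factor-wise quotients on $\wt\GJ_{T''}$. Hence $\circ_i$ descends to a well-defined cellular map $\FM_2^W(k) \times \FM_2^W(\ell) \to \FM_2^W(k+\ell-1)$. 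The main bookkeeping point, and what I expect to be the subtlest step, is precisely this factor-wise compatibility for boundary faces meeting the $i$-th leaf of $T$ (where $T'$ is grafted in); but because $q_F$ never mixes distinct interior vertices and grafting only adds disjoint new factors, the commutation is automatic, and cellularity follows.
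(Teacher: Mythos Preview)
Your approach is exactly the one the paper intends: its entire proof is ``Similar to the proof of Prop.~\ref{prop:K_r^W_operad},'' and you correctly identify $\wt\GJ_T \times \wt\GJ_{T'}$ with $\wt\GJ_{T''}$ via grafting, which is the analogue of the cube inclusion in that earlier argument.

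There is one inaccuracy in Step~2. The domain $\FM_2^W(k)\times\FM_2^W(\ell)$ carries the \emph{product} CW structure, so on $\wt\GJ_T\times\wt\GJ_{T'}$ a cell is a product of a simplex in $\wt\GJ_T$ with a simplex in $\wt\GJ_{T'}$. This is \emph{not} the same as the simplicial structure on $\wt\GJ_{T''}$, which (per Def.~\ref{p:GJ_T}) is obtained by refining the full product $\prod_{w\in T''_\inte} W^W_{\bn(w)}$ all at once; the latter structure is strictly finer in general, since a product $\Delta^p\times\Delta^q$ gets subdivided into $\binom{p+q}{p}$ top simplices. So the two decompositions do not ``literally coincide,'' and $\circ_i$ does not carry each cell bijectively to a cell. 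The fix is exactly the sentence that concludes the proof of Prop.~\ref{prop:K_r^W_operad}: the target decomposition is a refinement of the source's product structure, and that is already enough for cellularity. With that one adjustment your argument goes through; your Step~3 on factor-wise compatibility of the quotients $q_F$ is more detail than the paper supplies and is correct.
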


\begin{proof}
Similar to the proof of Prop.~\ref{prop:K_r^W_operad}.
\end{proof}

\begin{remark}
\label{rem:FM_W}
Fix $r \geq 1$, $\bn \in \bZ_{\geq0}^r\setminus\{\bzero\}$, and a 2-permutation $\sigma$ of type $\bn$.
Then the associated forgetful map
\begin{align}
f_\sigma^W\colon W_\bn^W \to \FM_2^W(|\bn|)
\end{align}
is cellular.
This map is defined in the obvious way: we first identify $W_\bn^W$ with the corresponding $\wt\GJ_T$, where $T$ is a Getzler--Jones datum whose associated tree $T$ is a corolla with $|\bn|$ leaves.
Then, we include $\wt\GJ_T$ into the disjoint union $\bigsqcup_T \wt\GJ_T$, and finally take the quotient to land in $\FM_2^W(|\bn|)$.
\end{remark}


\begin{thebibliography}{10}

\bibitem[Ab]{abouzaid}
M.~Abouzaid.
\newblock Symplectic cohomology and Viterbo's theorem.
\newblock In {\it Free Loop Spaces in Geometry and Topology}, IRMA Lectures in Mathematics and Theoretical Physics.

\bibitem[Ba]{barber}
D.~Barber.
A Comparison of Models for the Fulton--Macpherson Operads.
\newblock Thesis.

\bibitem[BoaVo]{boardman_vogt}
J.M.~Boardman, R.M.~Vogt.
\newblock
{\it Homotopy invariant algebraic structures on topological spaces}, Lect.\ Notes Math.\ 347 (1973).

\bibitem[Bo1]{b:2-associahedra}
N.~Bottman.
\newblock 2-associahedra.
\newblock {\it Algebra.\ Geom.\ Topol.} 19 (2019), no.\ 2, 743--806.

\bibitem[Bo2]{b:realization}
N.~Bottman.
\newblock Moduli spaces of witch curves topologically realize the 2-associahedra.
\newblock Accepted (2019), {\it Journal of Symplectic Geometry}.

\bibitem[Bo3]{b:sing}
N.~Bottman.
\newblock Pseudoholomorphic quilts with figure eight singularity.
\newblock Accepted (2018), {\it Journal of Symplectic Geometry}.

\bibitem[Bo4]{b:thesis}
N.~Bottman.
\newblock Pseudoholomorphic quilts with figure eight singularity.
\newblock Thesis (2015).

\bibitem[BoCa]{bc}
N.~Bottman, S.~Carmeli.
\newblock $(A_\infty,2)$-categories and relative 2-operads.
\newblock Submitted; available on \url{https://arxiv.org}.

\bibitem[BoOb]{bo}
N.~Bottman, A.~Oblomkov.
\newblock A compactification of the moduli space of marked vertical lines in $\bC^2$.
\newblock Submitted; available at \url{https://arxiv.org/abs/1910.02037}.

\bibitem[BoWe]{bw:compactness}
N.~Bottman, K.~Wehrheim.
\newblock Gromov compactness for squiggly strip shrinking in pseudoholomorphic quilts.
\newblock {\it Selecta Math. (N.S.)} 24 (2018), no.\ 4, pp.\ 3381--3443.

\bibitem[FoxNeu]{fox_neuwirth}
R.~Fox, L.~Neuwirth.
\newblock The braid groups.
\newblock {\it Math.\ Scand.} 10 (1962), 119--126.

\bibitem[FriPi]{fp}
R.~Fritsch, R.A.~Piccinini.
\newblock {\it Cellular structures in topology.}
\newblock Cambridge Studies in Advanced Mathematics, 19.
Cambridge University Press, Cambridge, 1990.

\bibitem[FuMac]{fm}
W.~Fulton, R.~MacPherson.
\newblock A compactification of configuration spaces.
\newblock {\it Ann.\ of Math.} (2), 139(1):183--225, 1994.

\bibitem[GeJo]{getzler_jones}
E.~Getzler, J.D.S.~Jones.
\newblock Operads, homotopy algebra and iterated integrals for double loop spaces.
\newblock Preprint (1994), available at \url{https://arxiv.org/abs/hep-th/9403055}.

\bibitem[Ma'uWeWo]{mww}
S.~Ma'u, K.~Wehrheim, C.~Woodward.
\newblock $A_\infty$ functors for Lagrangian correspondences.
\newblock {\it Selecta Math.\ (N.S.)} 24 (2018), no.\ 3, 1913--2002.

\bibitem[Sa1]{salvatore_main}
P.~Salvatore.
\newblock A cell decomposition of the Fulton MacPherson operad.
\newblock Preprint (2019), available at \url{https://arxiv.org/abs/1906.07694}.

\bibitem[Sa2]{salvatore_aux}
P.~Salvatore.
\newblock The Fulton MacPherson operad and the W-construction.
\newblock Preprint (2019), available at \url{https://arxiv.org/abs/1906.07696}.

\bibitem[Vo]{voronov}
A.A.~Voronov.
\newblock Homotopy Gerstenhaber algebras.
\newblock In {\it Conf\'erence Mosh\'e Flato 1999}.
Mathematical Physics Studies, vol.\ 21/22.
Springer, Dordrecht.

\end{thebibliography}
\end{document}